\documentclass[11pt]{article}

\usepackage[margin=1in]{geometry}
\usepackage{amsmath,amssymb,amsthm,mathtools}
\usepackage{enumitem}
\usepackage{xcolor}
\usepackage{hyperref}
\usepackage{comment}
\usepackage[nameinlink,capitalize,noabbrev]{cleveref}

\hypersetup{
  colorlinks=true,
  linkcolor=blue!60!black,
  citecolor=blue!60!black,
  urlcolor=blue!60!black
}

\newtheorem{theorem}{Theorem}[section]
\newtheorem{proposition}[theorem]{Proposition}
\newtheorem{lemma}[theorem]{Lemma}
\newtheorem{corollary}[theorem]{Corollary}

\newtheorem{definition}[theorem]{Definition}
\newtheorem{conjecture}[theorem]{Conjecture}
\theoremstyle{remark}
\newtheorem{remark}[theorem]{Remark}

\newcommand{\F}{\mathcal F}

\newcommand{\W}{\mathcal W}

\newcommand{\1}{\mathbf 1}
\newcommand{\sr}{\operatorname{sr}}
\newcommand{\rank}{\operatorname{rank}}

\newcommand{\Span}{\operatorname{span}}
\newcommand{\QQ}{\mathbb Q}

\title{A Polynomial Improvement of Naslund--Sawin Bound for Sunflower-Free Families Using Triangular Tensors}
\author{Omran Ahmadi$^\dagger$ \and Hassan Norouzi$^*$}

\date{
\vspace*{3mm}
\parbox{\linewidth}{
\centering
\small
 $^\dagger$ School of Mathematics,\\ 
  Institute for Research in Fundamental Sciences (IPM),\\ 
  E-mail: \texttt{oahmadid@ipm.ir}
 \endgraf\medskip
  $^*$ School of Mathematics,\\ 
  Institute for Research in Fundamental Sciences (IPM),\\
  E-mail: \texttt{norouzi@ipm.ir}
  \endgraf\medskip
  \today
  }
}

\begin{document}
\maketitle

\begin{abstract}
Naslund and Sawin used the slice-rank method for diagonal tensors to prove that
\[
  |\F|
  =O\!\left(n^{1/2}\left(\frac{3}{2^{2/3}}\right)^n\right)
\]
 for any sunflower-free family \(\F\subseteq 2^{[n]}\). We prove a lemma similar to the slice-rank lemma for the newly-defined $i$-triangular tensors, and use it to achieve a polynomial-factor improvement of the bound of Naslund and Sawin by proving that 

\[
  |\F|=O\!\left(n^{1/6}\left(\frac{3}{2^{2/3}}\right)^n\right)
\]
for any sunflower-free family \(\F\subseteq 2^{[n]}\).
\end{abstract}


\section{Introduction}
Let $[n]:=\{1,2,\cdots,n\}$. For $k\ge 3$, a collection $\mathcal{F}$ of subsets of $[n]$ is said to contain a {\it{$k$-sunflower}} if there are $k$ pairwise distinct sets in $\mathcal{F}$ such that the intersection of any pair of them is the same. A family $\mathcal{F}$ is said to be {\it{$k$-sunflower free}} if it does not contain a $k$-sunflower. The notion of a sunflower in set-systems was introduced by Erd\H{o}s and Rado in~\cite{ErdosRado}, where the following conjecture was made.

\begin{conjecture}[Erd\H{o}s-Rado Sunflower Conjecture]
For $k\ge 3$, let $\mathcal{F}$ be a $k$-sunflower free family of $m$-subsets of $[n]$, i.e., each set is of size $m$. Then 
\[
|\mathcal{F}|\le \mu^m
\]
for a constant $\mu$ depending only on $k$.
\end{conjecture}

Later, Erd\H{o}s and Szemer\'{e}di~\cite{ErdosSzemeredi1978} made the following non-uniform version of the above conjecture. 
\begin{conjecture}[Erd\H{o}s-Szemer\'{e}di Sunflower Conjecture]
 For $k\ge 3$, let $\mathcal{F}$ be a $k$-sunflower free family of subsets of $[n]$. Then 
\[
|\mathcal{F}|\le \lambda^n
\]   
for a constant $\lambda<2$ depending only on $k$.
\end{conjecture}
In~\cite{NaslundSawin}, Naslund and Sawin used the slice-rank method for diagonal tensors to settle the above conjecture by proving that: 
\begin{theorem}\label{Naslund-Sawin-Theorem}
Let $\mathcal{F}$ be a sunflower-free family of subsets of $[n]$. Then
\[
|\F|\le 3n\sum_{k\le n/3}\binom nk
  =O\!\left(n^{1/2}\left(\frac{3}{2^{2/3}}\right)^n\right).
\]

\end{theorem}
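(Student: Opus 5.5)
We follow the Naslund--Sawin strategy: reduce to families of fixed size, encode a three-term-progression-type condition in $\mathbb F_3^n$, and bound the family size by the slice rank of a diagonal tensor. Here "sunflower" means a $3$-sunflower.

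\textbf{Reduction to one size class.} Identify each $A\subseteq[n]$ with its indicator vector $\1_A\in\{0,1\}^n\subseteq\mathbb F_3^n$. Partition $\F$ by cardinality, $\F=\bigcup_{m=0}^n \F_m$ with $\F_m=\{A\in\F:|A|=m\}$. It suffices to show $|\F_m|\le 3\sum_{k\le n/3}\binom nk$ for each $m$, since there are $n+1$ classes. The factor $3n$ in the statement presumably absorbs this count, possibly after a slightly sharper per-class bound or by discarding the trivial class $m=0$.

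\textbf{Key observation.} For three distinct sets $A,B,C$ of the same size $m$, we claim that $\1_A+\1_B+\1_C=0$ in $\mathbb F_3^n$ (with zero replaced by the correct condition, see below) holds iff they form a sunflower. Coordinatewise, the sum $x+y+z$ with $x,y,z\in\{0,1\}$ is $0$ or $3$ exactly when the coordinate lies in all three sets or in none, and it is never in exactly one or two of them. So the condition says each element lies in $0$ or $3$ of the sets, and this forces $A=B=C$.

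That is the wrong condition. The correct one is that each coordinate of $(x,y,z)$ lies in $\{(0,0,0),(1,1,1),(1,0,0),(0,1,0),(0,0,1)\}$, i.e.\ no element lies in exactly two of the sets. This is exactly the sunflower condition: the pairwise intersections all equal the triple intersection. Over $\mathbb F_3$, the patterns with $x+y+z\in\{0,1\}$ are exactly these five, because a pair gives sum $2$. Hence $\F_m$ is sunflower-free iff there are no distinct $A,B,C\in\F_m$ with $\1_A+\1_B+\1_C\in\{0,1\}^n$.

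\textbf{Tensor.} Define $T:\F_m^3\to\mathbb F_3$ by
\[T(A,B,C)=\prod_{i=1}^n\bigl(1-(a_i+b_i+c_i)(a_i+b_i+c_i-1)\bigr)\]
computed in $\mathbb F_3$, where $a_i,b_i,c_i$ are the coordinates of $\1_A,\1_B,\1_C$. Each factor is $1$ when the coordinate sum lies in $\{0,1\}$ and $0$ when it equals $2$, so $T(A,B,C)\neq 0$ iff no element lies in exactly two of the sets. For $A=B=C$ the sum is $0$ or $3\equiv 0$, so $T(A,A,A)=1$. For non-diagonal triples with $A,B,C$ distinct, $T=0$ by sunflower-freeness.

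Triples with exactly two equal sets need care. Take $A=B\neq C$; then $A\cap C$ consists of elements in exactly two of the sets whenever it is not contained in $A\cap B\cap C$. In fact, elements of $A\setminus C$ lie in exactly two of the sets. Since $|A|=|C|$ and $A\neq C$, the set $A\setminus C$ is nonempty, so $T=0$. This is precisely where the uniform size class is used. Thus $T$ is diagonal with nonzero diagonal, and the slice rank lemma gives $|\F_m|\le \sr(T)$.

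\textbf{Bounding the slice rank.} Each factor is a polynomial of degree $2$ in $a_i+b_i+c_i$, so $T$ is a polynomial of total degree at most $2n$. Since we may use multilinear monomials on $\{0,1\}$, each monomial has the form $a^{\alpha}b^{\beta}c^{\gamma}$ with $|\alpha|+|\beta|+|\gamma|\le 2n$ and disjoint supports within each variable block. Some block has degree at most $2n/3$, so grouping monomials by their smallest-degree block gives
\[\sr(T)\le 3\sum_{k\le 2n/3}\binom nk.\]
This is too weak. The main obstacle is recovering the exponent $n/3$ rather than $2n/3$.

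To fix this, I would factor per coordinate. In each coordinate the function $f(x,y,z)=[\text{not exactly two of }x,y,z\text{ equal to }1]$ on $\{0,1\}^3$ should be written as a polynomial in which every monomial has degree at most $1$ in at least one of the variables in a suitable sense. Then count monomials of the product by how many coordinates use $a$-degree $0$ versus $1$, so that the cheapest variable has at most $n/3$ nonconstant coordinates. Concretely, I would instead use the Naslund--Sawin trick of multilinearizing over $\{0,1\}$ and noting that $T$ restricted to $\{0,1\}$-vectors has total multilinear degree at most $n$, rather than $2n$. This needs the per-coordinate function to have multilinear degree $1$, which should be checked: $f=1-(xy+yz+xz)+3xyz$, which has degree $3$, but its degree-$\ge2$ part can be rewritten in $\mathbb F_3$ using the sum constraint. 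Given total degree $n$, the pigeonhole argument yields $\sr(T)\le 3\sum_{k\le n/3}\binom nk$.

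Multiplying by the $\le n$ relevant size classes gives $|\F|\le 3n\sum_{k\le n/3}\binom nk$. The asymptotic $O\bigl(n^{1/2}(3/2^{2/3})^n\bigr)$ then follows from Stirling's formula, since $\binom{n}{n/3}\asymp n^{-1/2}\,(3/2^{2/3})^n$ and the sum is dominated by a geometric series near $k=n/3$.
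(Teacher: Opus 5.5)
Your architecture is the same as the paper's: split $\F$ into size layers $\F_m$, show a tensor is diagonal on $\F_m^3$, apply Tao's lemma, and bound the slice rank by a degree/pigeonhole count. However, the tensor you build fails, and the step you propose for recovering the exponent $n/3$ cannot be carried out.

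First, the factor $1-s(s-1)$ does not vanish at $s=2$ in $\mathbb{F}_3$: it equals $1-2\equiv 2$. So your tensor is $T(A,B,C)=(-1)^{e}$, where $e$ is the number of elements lying in exactly two of the sets. It never vanishes and is not diagonal. (A correct quadratic would be $1+s(s-1)$.)

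Second, any quadratic factor gives total degree $2n$, hence only $3\sum_{k\le 2n/3}\binom nk\ge 3\cdot 2^{n-1}$ per layer, which is useless. Your proposed repair, rewriting the per-coordinate indicator $f=1-(xy+yz+zx)+3xyz$ with multilinear degree $1$, is impossible. Over any field the square-free monomials form a basis of the functions $\{0,1\}^3\to\mathbb{F}$, so the multilinear representation is unique. Since $x,y,z$ vary independently over $\{0,1\}$, there is no ``sum constraint'' to exploit. Over $\mathbb{F}_3$ the indicator is $1-xy-yz-zx$, of degree exactly $2$. Choosing $xy,yz,zx$ cyclically across coordinates gives monomials with all three blocks of size about $2n/3$, so the pigeonhole count cannot beat $2n/3$.

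The missing idea is that you do not need the indicator of the allowed patterns, only a function with the right zero set. The paper uses the linear factor $2-a_i-b_i-c_i$ over $\mathbb{Q}$. It equals $2,1,0,-1$ according as $i$ lies in $0,1,2,3$ of the sets. Set $P(A,B,C)=\prod_{i=1}^n(2-a_i-b_i-c_i)$. Then:
\begin{itemize}
\item $P(A,A,A)=\prod_i(2-3a_i)=(-1)^{|A|}2^{n-|A|}\neq 0$;
\item pairwise distinct triples give $0$ by sunflower-freeness;
\item triples with exactly two equal entries give $0$ within a layer, by your own observation that $A\setminus C\neq\emptyset$ for $A\neq C$ of the same size. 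Such a coordinate has type $(1,1,0)$ or a permutation of it, and its factor is zero.
\end{itemize}
Since $P$ has total degree $n$, every monomial $a_Ib_Jc_K$ has $\min\{|I|,|J|,|K|\}\le n/3$. This gives $\sr\le 3\sum_{k\le n/3}\binom nk$ on each layer. The same linear factor also works over $\mathbb{F}_3$, where its values are $2,1,0,2$.

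Your handling of the layer count is fine. The paper itself obtains $3(n+1)$, and using $|\F_0|,|\F_n|\le 1$ recovers the stated $3n$.
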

    
The slice-rank method for diagonal tensors which is based on the slice-rank lemma of Tao for diagonal tensors~\cite{TaoWeblog} is a polynomial method rooted in a breakthrough paper by Lev, Croot and Pach~\cite{croot_progression-free_2016} and subsequent paper of Ellenberg and Gijswijt~\cite{ellenberg2016large}. 

The slice-rank lemma for diagonal tensors has been generalized to other types of tensors, see for example~\cite{Amanov,AhmadiNorouzi}. In~\cite{AhmadiNorouzi}, we introduced the notion of higher-order $i$-triangular tensors and proved an analogue of Tao's slice-rank lemma for such tensors of even order. In this paper, first we extend the corresponding result to $i$-triangular tensors of any order, then, we apply it to prove our main result given below which is a polynomial improvement on the Naslund-Sawin bound of Theorem~\ref{Naslund-Sawin-Theorem}. 

\begin{theorem}\label{MainTheorem}
 Let $\mathcal{F}$ be a sunflower-free family of subsets of $[n]$. Then
\[
|\F|\le O\!\left(n^{1/6}\left(\frac{3}{2^{2/3}}\right)^n\right).
\]   
\end{theorem}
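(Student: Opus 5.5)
We follow Naslund--Sawin's reduction, but replace the diagonal-tensor slice-rank lemma by the triangular-tensor lemma for $3$-tensors, so that we no longer lose the factor $n$ coming from partitioning $\F$ by set size.

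\textbf{Step 1: recall the Naslund--Sawin setup.} Identify subsets of $[n]$ with vectors in $\{0,1\}^n$. For three distinct sets $A,B,C$, being a $3$-sunflower is a coordinatewise condition: in no coordinate is the pattern $(a_i,b_i,c_i)$ one with exactly two ones. Equivalently, $a_i+b_i+c_i\in\{0,1,3\}$ for all $i$. Naslund and Sawin restrict to a layer $\F_m=\{A\in\F: |A|=m\}$. They then use a polynomial $P(x,y,z)=\prod_i f(x_i,y_i,z_i)$ in which $f$ vanishes exactly on the patterns with two ones, with the uniform size used to control the triple $A=B=C$ against triples with two of the sets equal. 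The diagonal slice-rank lemma gives $|\F_m|\le 3\sum_{k\le n/3}\binom nk$. Summing over the $n+1$ layers produces the factor $n$.

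\textbf{Step 2: avoid the split into layers.} Order $\F$ by size (ties broken arbitrarily) and consider the tensor $T(A,B,C)$ on $\F^3$ given by the same polynomial, or a variant of it. In the non-uniform setting, $T(A,B,C)$ need not vanish when, say, $A=B\ne C$. The plan is to verify that, with a suitable linear ordering, $T$ is $i$-triangular: its entries vanish whenever the indices are not all equal and violate the ordering condition, while the diagonal entries are nonzero. The point is that triples where two sets coincide and the third is larger (or smaller) must vanish because of the coordinate patterns, and only one orientation survives.

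\textbf{Step 3: apply the triangular lemma and refine the count.} The extended lemma for $i$-triangular tensors of any order, which we prove first, gives $|\F|\le\sr(T)$. We then bound $\sr(T)$ by splitting the monomials of $P$ according to which variable carries degree at most $n/3$, as in Naslund--Sawin. To reach the exponent $n^{1/6}$, we keep a polynomial that retains the size constraint only partially. Concretely, we group sizes into $O(n^{1/3})$ windows of width about $n^{2/3}$ and use triangularity within and across windows, with the slice-rank cost of each window controlled by the Gaussian tail. We balance so that $\binom{n}{\le n/3}\sim n^{-1/2}(3/2^{2/3})^n$ is multiplied by $n^{2/3}$ rather than $n$, which gives $n^{1/6}$.

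\textbf{Main obstacle.} The hardest step is designing the polynomial and the ordering in Step 2 so that triangularity holds without uniformity, while the slice rank only increases by a factor $n^{2/3}$. We would first attempt a polynomial in the total sizes of the form $\prod_{j}(|x|+|y|+|z|-s_j)$ over a restricted set of $s_j$. The exponent $2/3$ should then come from optimizing window width against the degree budget near $n/3$.
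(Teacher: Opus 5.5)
Your proposal leaves open the two steps that actually make up the proof, and the one concrete suggestion you make cannot work.

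\textbf{The tensor.} Contrary to your Step 2, the coordinate patterns do not make $P(A,B,C)=\prod_i(2-a_i-b_i-c_i)$ triangular. Since $P$ is symmetric and $P(X,X,Y)\neq 0$ exactly when $X\subseteq Y$, a strict inclusion $X\subsetneq Y$ in $\F$ makes all three arrangements of $\{X,X,Y\}$ nonzero. Whichever of $X,Y$ comes first in the order, one of these arrangements is an off-diagonal triple whose $i$-th entry is minimal. So $P$ is not $i$-triangular for any $i$ or any ordering, and you need a factor that breaks the symmetry among the coordinates. The paper orders sets by increasing size and uses $T(A,B,C)=P(A,B,C)\1_{\{|B|=|C|\}}$. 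This kills $(X,X,Y)$ and $(X,Y,X)$ when $|X|<|Y|$. If $|X|=|Y|$, then $X\subseteq Y$ already forces $X=Y$. The triple $(X,Y,Y)$ with $X\prec Y$ would require $Y\subsetneq X$. Pairwise distinct triples vanish by sunflower-freeness, and the diagonal is unchanged.

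Your candidate factor $\prod_j(|x|+|y|+|z|-s_j)$ depends only on $|A|+|B|+|C|$. By the above, it would have to vanish at $2|X|+|Y|$ for every strict inclusion $X\subsetneq Y$ in $\F$, while being nonzero at $3|Z|$ for every $Z\in\F$. For the sunflower-free family $\{\emptyset,\{1\},\{1,2,3\}\}$ both requirements fall on the value $3$.

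\textbf{The estimate.} Your target ($n^{2/3}$ times $\binom n{\lfloor n/3\rfloor}$) is right, but Step 3 does not contain the mechanism that produces it. Also, near $n/3$ the binomial coefficients decay geometrically, $\binom n{k-1}/\binom nk<\frac12$, not like a Gaussian tail. The saving comes from the asymmetry of $T$. Expand $P$ into monomials $a_Ib_Jc_K$ with $|I|+|J|+|K|\le n$, and choose $r_A+r_B+r_C\ge n$.
\begin{itemize}
\item Monomials with $|I|\le r_A$ are sliced through $A$ at total cost $N(r_A)=\sum_{i\le r_A}\binom ni$, because $\1_{\{|B|=|C|\}}$ stays in the $(B,C)$ factor.
\item Slices through $B$ or $C$ must carry the layer indicator, i.e.\ use functions $\1_{\{|B|=\ell\}}x_J(B)$ with $|J|\le r_B$. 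This costs an extra factor of order $n$. The paper computes the exact dimension $D(r_B)=2\sum_{i<r_B}\binom ni+(n-2r_B+1)\binom n{r_B}$ via Gottlieb's inclusion-matrix theorem, though the crude bound $(n+1)N(r_B)$ suffices asymptotically.
\end{itemize}
Take $B_n=\binom n{\lfloor n/3\rfloor}$, $r_B=r_C=\lfloor n/3\rfloor-t$ and $r_A=n-2r_B$. This gives $O\bigl((4^t+n2^{-t})B_n\bigr)$, and $t=\lfloor\frac13\log_2 n\rfloor$ yields $O(n^{2/3}B_n)=O(n^{1/6}(3/2^{2/3})^n)$.

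Splitting $\F$ into size windows does not help with this tensor. By the same computation, a window of $w$ layers costs about $w^{2/3}B_n$. The windows together therefore cost about $nw^{-1/3}B_n$, which is smallest for a single window; your width $n^{2/3}$ would give only $n^{7/9}B_n$.
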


This paper is organized as follows. In Section~\ref{LinearAlgebra}, we briefly explain the slice-rank method and then the slice-rank lemma for $i$-triangular tensors is proved. Section~\ref{Naslund-Sawin} contains a brief account of the Naslund-Sawin theorem. In Section~\ref{Triangular}, our 1-triangular tensor for sunflower-free families of sets is presented, and in Section~\ref{SliceRankEstimate} the bound of Theorem~\ref{MainTheorem} is derived. 

\section{$i$-Triangular slice-rank lemma}\label{LinearAlgebra}
Let $A_i$, $i=1,\ldots,k$, be some finite sets and $\mathbb{F}$ be a field. A $k$-tensor $T$ is a function from $\prod_i A_i$ to $\mathbb{F}$. A tensor $T$ is called a {\it{slice}} if it can be decomposed as \(T = f g\) where $f$ is a 1-tensor from $A_j$ to $\mathbb{F}$ for some $j$ in $[k]$ and $g$ is a $(k-1)$-tensor from $\prod_{i\neq j}A_i$ to $\mathbb{F}$. The important notion of the slice-rank of a $k$-tensor is as follows. 
\begin{definition}[Slice-rank]
    Let $T\colon \prod A_i \longrightarrow \mathbb{F}$ be a $k$-tensor. The slice-rank denoted $\operatorname{sr}(T)$ of $T$ is the minimum $r$ such that $T$ can be written as the sum of $r$ slices, i.e., $\operatorname{sr}(T)$ is the minimum number $r$ such that
    \[
        T= \sum_{i=1}^r f_i g_i
    \]
    where for every $i$, $f_i$ is a 1-tensor from $A_j$ to $\mathbb{F}$ for some $j$ in $[k]$ and $g_i$ is a $(k-1)$-tensor from $\prod_{i\neq j}A_i$ to $\mathbb{F}$ .
\end{definition}

When $A_1=\cdots=A_k$, a $k$-tensor $T$ is diagonal if
\(T(a_1,\ldots,a_k)\neq 0\)
implies that
\(a_1=\cdots=a_k\).

The following lemma has been at the heart of the slice-rank method with diagonal tensors. 
\begin{lemma}[T. Tao \cite{SawinTaoNotes}]\label{diagonalTensor}
    \label{slicerank}
	Let $A$ be a finite set and $\mathbb{F}$ be a field. If $T\colon A^k \rightarrow \mathbb{F}$ is a diagonal $k$-tensor  with non-zero diagonal entries, then $\operatorname{sr}(T)=|A|$.
\end{lemma}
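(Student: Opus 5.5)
The upper bound $\sr(T)\le |A|$ is immediate. Write $T(a_1,\ldots,a_k)=\sum_{a\in A} c_a\,\delta_a(a_1)\,\delta_a(a_2)\cdots\delta_a(a_k)$, where $c_a=T(a,\ldots,a)$ and $\delta_a$ is the indicator of $a$. Each summand is a slice, with $f=\delta_a$ on the first coordinate. So the substance of the lemma is the lower bound $\sr(T)\ge |A|$, which I will prove by induction on $k$.

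\textbf{Base case $k=2$.} A slice is then a rank-one matrix, so $\sr(T)$ is the matrix rank of $T$. A diagonal matrix with nonzero diagonal entries has rank $|A|$.

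\textbf{Inductive step.} Suppose the claim holds for $k-1$, and suppose $T=\sum_{i=1}^r f_i g_i$ with $r<|A|$. Group the slices by the coordinate $j$ on which $f_i$ depends, and let $I_1$ be the set of indices whose slice uses coordinate $1$, with $|I_1|=r_1$.
\begin{enumerate}
\item Consider the space $V$ of functions $h\colon A\to\mathbb F$ with $\sum_{a} h(a) f_i(a)=0$ for all $i\in I_1$. Its dimension is at least $|A|-r_1$.
\item Among functions in $V$, choose $h$ whose support $S$ has the maximum size. I claim $|S|\ge \dim V$. Indeed, if some function in $V$ were nonzero outside $S$, adding a suitable multiple of it to $h$ would enlarge the support, since we can choose the multiple to avoid cancellation on $S$ when $\mathbb F$ is large enough. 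Over a small field, instead use a standard echelon-form argument: there is a $v\in V$ whose support has size at least $\dim V$.
\item Contract $T$ against $h$ in the first coordinate, setting
\[
T'(a_2,\ldots,a_k)=\sum_{a_1} h(a_1)\,T(a_1,\ldots,a_k).
\]
\end{enumerate}

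Now compare the two descriptions of $T'$. Contracting the decomposition gives the following: slices with $i\in I_1$ vanish, because $h\perp f_i$. Every other slice remains a slice of a $(k-1)$-tensor on $A^{k-1}$. Hence $\sr(T')\le r-r_1$. On the other hand, $T'(a_2,\ldots,a_k)=h(a)c_a$ if $a_2=\cdots=a_k=a$, and $0$ otherwise. So $T'$ is a diagonal $(k-1)$-tensor whose nonzero diagonal entries are indexed by $S$. Restricting $T'$ to $S^{k-1}$ can only decrease slice rank, and the induction hypothesis applies to the restriction, giving
\[
\sr(T')\ge |S|\ge |A|-r_1 .
\]
Combining, $r-r_1\ge |A|-r_1$, so $r\ge |A|$, which contradicts $r<|A|$.

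The delicate step is step 2: finding a vector of large support in $V$. The clean way to handle it is echelon form. Put a basis of $V$ in reduced row echelon form. This gives $\dim V$ pivot coordinates, and each basis vector is $1$ at its own pivot and $0$ at the other pivots. Their sum is therefore nonzero at every pivot coordinate. Its support has size at least $\dim V$, and this works over any field.
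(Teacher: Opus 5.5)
Your proof is correct and complete. It is essentially Tao's original inductive argument: contract in the first coordinate against a function $h$ orthogonal to the first-coordinate slice factors, with $|\operatorname{supp} h|\ge |A|-r_1$, then apply the $(k-1)$ case to the restriction to $S^{k-1}$. Your reduced-echelon-form justification of step~2 is the right one, and it settles the small-field case that the ``maximal support'' heuristic does not. Starting the induction at $k=2$ is also necessary, since for $k=1$ every nonzero tensor has slice rank $1$.

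The paper gives no proof of this lemma; it only cites Tao. Within the paper, the lemma follows as the special case of Lemma~\ref{TriangularTensor}, because a diagonal tensor is $i$-triangular for any total order on $A$. That lemma is proved from the Sawin--Tao cover bound, Lemma~\ref{tao-cover-lemma}. With one coordinate's order reversed, the support $\{\delta_a : a\in A\}$ is an antichain, so every $\delta_a$ is maximal. Hence any cover $\Gamma=\bigcup_j\Gamma_j$ satisfies $\bigcup_j\pi_j(\Gamma_j)=A$.

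The trade-off is as follows. Your route is self-contained and uses only the elementary large-support fact. The paper's route is a two-line deduction, but from a considerably heavier black box. That black box looks only at the maximal elements of the support, and this is what lets it handle $i$-triangular tensors. Your contraction step does not extend to that setting. Contracting one coordinate gives $T'(a,\ldots,a)=\sum_b h(b)\,T(a,\ldots,b,\ldots,a)$, and here the permitted nonzero off-diagonal entries of a triangular tensor feed into $T'$. So $T'$ need not be diagonal or triangular, and the support of $h$ no longer controls its diagonal.
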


As the notions of slice-rank and usual rank coincide for matrices, i.e. 2-tensors, the above lemma can be considered as the natural generalization of the fact that square-matrices with non-zero diagonal entries are full rank. Thus, naturally, one wonders whether it is possible to generalize the other facts about the ranks of special matrices to higher-order tensors. What follows in this section can be viewed of a generalization of the fact that triangular matrices with non-zero diagonal entries have full rank. As it was mentioned earlier, in~\cite{AhmadiNorouzi}, we introduced the notion of higher-order $i$-triangular tensors and proved a lemma similar to the slice-rank lemma of Tao for even-order tensors. Here, the corresponding lemma, Lemma~\ref{TriangularTensor} in this paper, is proved for tensors of every order. First, we recall the definition of an $i$-triangular tensor. 

\begin{definition}\label{i-Triangular}
 Let $(A,\le)$ be a totally ordered finite set, and $\mathbb{F}$ be a field. A $k$-tensor $T:A^k\to \mathbb{F}$ is $i$-triangular if $T(x_1,\ldots,x_k)=0$ whenever $(x_1,\ldots,x_k)$ is off-diagonal and $x_i\le x_j$ for all $j\ne i$.    
\end{definition}

Our tool to prove our main result in this section is the following functional reformulation of Proposition 4 appearing in~\cite{SawinTaoNotes}.

\begin{lemma}
\label{tao-cover-lemma}
Let $(A_i,\le_i)$ for $i=1,\ldots,k$ be totally ordered finite sets, and $\preceq$ be the partial order on $\prod_{i=1}^kA_i$ induced from the orders $\le_i$ on $A_i$, i.e., $(a_1,\ldots,a_k)\preceq (b_1,\ldots,b_k)$ if $a_i\le_i b_i$ for every $i$. Furthermore, let $\mathbb{F}$ be a field, $T:\prod_{i=1}^kA_i\to \mathbb{F}$ be a $k$-tensor, 
\[
\operatorname{supp}(T)=\left\{x\in \prod_{i=1}^kA_i : T(x)\neq 0\right\}, 
\]
and $\Gamma\subseteq \operatorname{supp}(T)$ be the set of maximal elements of $\operatorname{supp}(T)$. Then
\[
\operatorname{sr}(T)\ge\operatorname{\min_{\substack{\Gamma_1,\cdots,\Gamma_k\\\Gamma=\bigcup_{j=1}^k\Gamma_j}}}\sum_{j=1}^k|\pi_j(\Gamma_j)|
\]
where the minimum is taken over all $\Gamma_1,\ldots,\Gamma_k\subseteq\prod_{i=1}^kA_i $ such that 
\[
\Gamma=\bigcup_{j=1}^k\Gamma_j,
\]
and $\pi_j(\cdot)$ denotes the projection onto the $j$-th coordinate. 
\end{lemma}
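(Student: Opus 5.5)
We prove the lemma by taking an optimal slice decomposition, passing to dual spaces of test functions on each $A_j$, and extracting a covering of $\Gamma$ whose projections are controlled by the number of slices of each type. This is the argument of Proposition 4 in~\cite{SawinTaoNotes}, written functionally.

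\textbf{Step 1: grouping the slices.} Fix a decomposition of $T$ into $r=\operatorname{sr}(T)$ slices. Group the slices according to the coordinate $j$ in which the $1$-tensor factor lives, and let $r_j$ be the number of slices of type $j$, so $r=\sum_j r_j$. Let $V_j\subseteq \mathbb{F}^{A_j}$ be the span of the $1$-tensors $f$ appearing in slices of type $j$; then $\dim V_j\le r_j$. Let
\[
U_j=\Big\{h\in\mathbb{F}^{A_j}:\sum_{a\in A_j}h(a)f(a)=0 \text{ for all } f\in V_j\Big\},
\]
so that $\dim U_j\ge |A_j|-r_j$.

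\textbf{Step 2: echelon form.} We use the following linear algebra fact. For a subspace $U\subseteq\mathbb{F}^{A}$ with $A$ totally ordered, let
\[
S(U)=\{\max\operatorname{supp}(h): 0\neq h\in U\}.
\]
Then $|S(U)|=\dim U$. To see this, take a basis in reduced echelon form with respect to the order on $A$, processing coordinates from the top down; the basis vectors have distinct top elements, and every nonzero combination has one of these top elements as its maximum. Put $S_j=S(U_j)$ and $B_j=A_j\setminus S_j$, so that $|B_j|\le r_j$. For each $a\in S_j$, fix $h_a\in U_j$ with $\max\operatorname{supp}(h_a)=a$; in particular $h_a(a)\neq 0$.

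\textbf{Step 3: the covering.} Define $\Gamma_j=\{\gamma\in\Gamma:\gamma_j\in B_j\}$. Then $\pi_j(\Gamma_j)\subseteq B_j$, and therefore
\[
\sum_j|\pi_j(\Gamma_j)|\le\sum_j r_j=\operatorname{sr}(T).
\]
It remains to show that $\Gamma=\bigcup_j\Gamma_j$. This is the main step, and we argue by contradiction. Suppose $\gamma\in\Gamma$ has $\gamma_j\in S_j$ for every $j$, and consider
\[
\Sigma=\sum_{x}T(x)\prod_{j=1}^k h_{\gamma_j}(x_j).
\]

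\emph{Computing $\Sigma$ from the decomposition.} Each slice of type $j$ contributes a factor $\sum_{x_j}f(x_j)h_{\gamma_j}(x_j)=0$, because $h_{\gamma_j}\in U_j$. Hence $\Sigma=0$.

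\emph{Computing $\Sigma$ directly.} The product $\prod_j h_{\gamma_j}(x_j)$ vanishes unless $x_j\le_j\gamma_j$ for all $j$, that is, unless $x\preceq\gamma$. If $x\preceq\gamma$, $x\neq\gamma$ and $T(x)\ne0$, then $x$ would be an element of $\operatorname{supp}(T)$ strictly below $\gamma$; this does not yet contradict anything. The correct observation is about the term $x=\gamma$ itself, which contributes $T(\gamma)\prod_j h_{\gamma_j}(\gamma_j)\neq0$, and we must show that all other terms vanish.

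\emph{Fixing the orientation.} For the other terms to vanish, the test functions must be supported \emph{above} $\gamma$ while $\gamma$ is maximal in $\operatorname{supp}(T)$. So we redo Step 2 with minima instead of maxima: set $S(U)=\{\min\operatorname{supp}(h):0\neq h\in U\}$, which again has size $\dim U$ by the same echelon argument, and choose $h_a\in U_j$ with $\min\operatorname{supp}(h_a)=a$. Now the product $\prod_j h_{\gamma_j}(x_j)$ is supported on $\{x:\gamma\preceq x\}$. On this set $T$ vanishes except at $x=\gamma$, by the maximality of $\gamma$ in $\operatorname{supp}(T)$. Therefore
\[
\Sigma=T(\gamma)\prod_j h_{\gamma_j}(\gamma_j)\neq0,
\]
contradicting $\Sigma=0$. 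Hence $\Gamma=\bigcup_j\Gamma_j$.

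\textbf{Conclusion.} The sets $\Gamma_1,\ldots,\Gamma_k$ cover $\Gamma$ and satisfy $\sum_j|\pi_j(\Gamma_j)|\le\operatorname{sr}(T)$, so the minimum in the statement is at most $\operatorname{sr}(T)$, as claimed.

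The main point requiring care is exactly this orientation issue: the echelon pivots must be taken at minima of supports so that maximality of $\Gamma$ isolates the single term $x=\gamma$. The only other ingredient is the echelon-form fact $|S(U)|=\dim U$, which is routine.
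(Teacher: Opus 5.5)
Your argument is correct and is essentially the proof of Proposition 4 in \cite{SawinTaoNotes}, which the paper cites without reproducing. The argument takes the annihilators $U_j$ of the type-$j$ slice factors and echelon pivots at minima of supports, which give $|A_j\setminus S_j|\le r_j$; it then pairs $T$ against $\bigotimes_j h_{\gamma_j}$, which vanishes by the decomposition but equals $T(\gamma)\prod_j h_{\gamma_j}(\gamma_j)\neq 0$ by maximality of $\gamma$. In a final write-up, state Step 2 with minima from the outset, since the maxima version you first wrote does not isolate the term $x=\gamma$.
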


\begin{lemma}\label{TriangularTensor}
Let $(A,\leq)$ be a totally ordered finite set, $k\geq 2$, and let
$i\in[k]$. Suppose 
\[
T\colon A^k\longrightarrow \mathbb{F}
\]
is $i$-triangular with non-zero diagonal entries. Then
\[
\operatorname{sr}(T)=|A|.
\]
\end{lemma}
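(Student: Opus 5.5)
The plan is to prove the two inequalities $\operatorname{sr}(T)\le|A|$ and $\operatorname{sr}(T)\ge|A|$ separately. The upper bound is the trivial one, and the lower bound will come from Lemma~\ref{tao-cover-lemma} with a well-chosen ordering of the coordinates.

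\emph{Upper bound.} Slice along coordinate $i$. For each $a\in A$, let $f_a$ be the indicator of $x_i=a$ on the $i$-th copy of $A$, and let $g_a(x_1,\ldots,\widehat{x_i},\ldots,x_k)=T(x_1,\ldots,a,\ldots,x_k)$. Then
\[
T=\sum_{a\in A}f_a g_a
\]
is a sum of $|A|$ slices, so $\operatorname{sr}(T)\le|A|$.

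\emph{Lower bound.} The idea is to put suitable total orders on the $k$ copies of $A$, apply Lemma~\ref{tao-cover-lemma}, and ensure that every diagonal point is a maximal element of $\operatorname{supp}(T)$. I would take the \emph{reversed} order $\ge$ on the $i$-th copy and the given order $\le$ on every copy $j\neq i$.

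Let $d=(a,\ldots,a)$. By hypothesis $T(d)\neq0$, so $d\in\operatorname{supp}(T)$. Suppose some $x\in\operatorname{supp}(T)$ with $x\neq d$ satisfies $x\succeq d$ in the induced partial order. Then $x_i\le a$ in the original order, and $x_j\ge a$ for all $j\neq i$. Hence $x_i\le x_j$ for every $j\neq i$, and $x$ is off-diagonal. Here we use $k\ge2$: if $x$ were diagonal, say $x=(b,\ldots,b)$, then $b\le a\le b$ would force $x=d$. Since $T$ is $i$-triangular, this gives $T(x)=0$, contradicting $x\in\operatorname{supp}(T)$. So the diagonal $D=\{(a,\ldots,a):a\in A\}$ is contained in the set $\Gamma$ of maximal elements of $\operatorname{supp}(T)$. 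This choice of orders is the only real step. With the naive choice, namely the same order on all coordinates, diagonal points need not be maximal.

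Finally, take any cover $\Gamma=\bigcup_j\Gamma_j$. Each coordinate projection $\pi_j$ is injective on $D$, so
\[
\sum_{j=1}^k|\pi_j(\Gamma_j)|\ \ge\ \sum_{j=1}^k|\pi_j(\Gamma_j\cap D)|\ =\ \sum_{j=1}^k|\Gamma_j\cap D|\ \ge\ |D|=|A|,
\]
where the last inequality holds because the sets $\Gamma_j\cap D$ cover $D$. The other maximal elements of $\Gamma$ only enlarge the left-hand side, so they can be ignored. By Lemma~\ref{tao-cover-lemma}, $\operatorname{sr}(T)\ge|A|$, which together with the upper bound gives $\operatorname{sr}(T)=|A|$.
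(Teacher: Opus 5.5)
Your proposal is correct and follows essentially the same route as the paper. The paper also reverses the order on the $i$-th copy, uses $i$-triangularity to show every diagonal point is a maximal element of $\operatorname{supp}(T)$, and derives $\sum_j|\pi_j(\Gamma_j)|\ge|A|$ from Lemma~\ref{tao-cover-lemma}; it gets the upper bound by the same trivial slicing along coordinate $i$, while your injectivity-of-$\pi_j$ count and explicit use of $k\ge 2$ are only cosmetic refinements.
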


\begin{proof}
 We equip the $i$-th copy of $A$ with
the reverse of $\leq$, and every other copy with $\leq$, and consider
the resulting product order $\preceq$ on $A^k$, i.e., $(a_1,\ldots,a_i,\ldots,a_k)\preceq (b_1,\ldots,b_i,\ldots,b_k)$ if $b_i\le a_i$ and for every $j\neq i$, $a_j\le b_j$. Now, let $\Gamma \subseteq \operatorname{supp}(T)$ be the set of maximal elements of $\operatorname{supp}(T)$.

We claim that every diagonal point
\[
\delta_a:=(a,\ldots,a),\qquad a\in A,
\]
is in $\Gamma$. If
\[
\delta_a\preceq (b_1,\ldots,b_k)\in \operatorname{supp}(T),
\]
then $b_i\leq a$ and $a\leq b_j$ for every $j\neq i$. In particular, $b_i\leq b_j$ for all $j\neq i$. Since
$(b_1,\ldots,b_k)\in\operatorname{supp}(T)$ and hence $T(b_1,\ldots,b_k)\neq 0$, $i$-triangularity implies that $(b_1,\ldots,b_k)$ is a diagonal element and thus
$b_1=\cdots=b_k=b$ for some $b$. The preceding inequalities imply
$b\leq a\leq b$, and hence $b=a$. Thus
$(b_1,\ldots,b_k)=\delta_a$, proving the claim.

Now, with the notation of Lemma~\ref{tao-cover-lemma}, if
\[
\Gamma=\Gamma_1\cup\cdots\cup\Gamma_k,
\]
then each $\delta_a$ belongs to some $\Gamma_j$, and therefore
$a\in\pi_j(\Gamma_j)$. Consequently,
\[
A\subseteq\bigcup_{j=1}^k\pi_j(\Gamma_j),
\qquad\text{so}\qquad
\sum_{j=1}^k|\pi_j(\Gamma_j)|\geq |A|.
\]
Thus, from the above lemma we have
\[
\operatorname{sr}(T)\geq |A|.
\]
The reverse inequality is immediate from the trivial slicing
\[
T(a_1,\ldots,a_k)
 =
 \sum_{a\in A}
 \mathbf{1}_{\{a_i=a\}}\,
 T(a_1,\ldots,a_{i-1},a,a_{i+1},\ldots,a_k).
\]
Hence
\[
\operatorname{sr}(T)=|A|.
\qedhere
\]
\end{proof}

\section{Naslund--Sawin 3-tensor}\label{Naslund-Sawin}

In this section, we briefly present Naslund and Sawin's proof of Theorem~\ref{Naslund-Sawin-Theorem}.
First, we notice that in order to prove that a collection \(\mathcal{F}\) of subsets of $[n]$ is sunflower-free, it suffices to prove that it is 3-sunflower free; i.e., there are not three distinct sets \(A,B,C\in\mathcal{F}\) such that 
\[
A\cap B=A\cap C=B\cap C.
\] 

Now, let $\mathcal{F}$ be sunflower-free, identify each set \(X\subseteq[n]\) with its characteristic vector
\[
x=(x_1,\dots,x_n)\in\{0,1\}^n,
\]
and consider the $3$-tensor 
\[
P(X,Y,Z)=\prod_{i=1}^n(2-x_i-y_i-z_i)
\]
from $\mathcal{F}^3$ to $\mathbb{Q}$. As $\mathcal{F}$ is sunflower-free and hence there are not three distinct sets in $\mathcal{F}$ with pairwise equal intersection and furthermore $P(X,Y,Z)=0$ if and only if exactly two of $x_i,y_i,z_i$ are equal for some $i$, the tensor $P$ satisfies the following conditions:
\begin{itemize}
    \item $P(X,Y,Z)=0$ if $X,Y$ and $Z$ are pairwise distinct,
    \item $P(X,X,Y)\neq 0$ whenever $X\subseteq Y$.
\end{itemize}

The second condition implies that the tensor $P$ is not diagonal if there are $X$ and $Y$ in $\mathcal{F}$ such that $X\subset Y$. So, one cannot apply the slice-rank Lemma~\ref{diagonalTensor}. To work around this obstacle, Naslund and Sawin consider the tensor $P$ over the subsets $\mathcal{F}_r$ of $\mathcal{F}$, $r=0,1,\ldots,n$, where 
\[
\mathcal F_r=\{A\in\mathcal F:|A|=r\}.
\]
It follows that $P$ is diagonal over $\mathcal{F}_r^3$ for every $r$ and hence using Lemma~\ref{diagonalTensor}:
\[
|\mathcal F_r|
\leq \operatorname{sr}(P).
\]

As every monomial 
\[x_Iy_Jz_K=\prod_{i\in I}x_i\prod_{j\in J}y_j\prod_{k\in K}z_k\]
in the expansion of \(P\) is of total degree at most \(n\), one of $I,J$ and $K$ is of size at most
\(n/3\), and hence slicing the monomials through such parts yields
\[
|\mathcal{F}_r|=\operatorname{sr}(P)\le 3\sum_{j=0}^{\lfloor n/3\rfloor}\binom{n}{j}
\]
for every $r$. Finally, since for some $r$, \(|\mathcal{F}_r|\ge |\mathcal{F}|/(n+1)\), it follows that
\[
|\mathcal F|
\leq
3(n+1)\sum_{j=0}^{\lfloor n/3\rfloor}\binom{n}{j}.
\]

\section{1-Triangular sunflower tensor}\label{Triangular}
In this section, we derive $1$-triangular tensor related to sunflower-free family of sets. 
We order the subsets of $[n]$ by increasing size, with an arbitrary tie-breaking order for the sets of the same size.  Thus
\[
  A\preceq B
\]
implies that either \(|A|<|B|\), or \(|A|=|B|\) and \(A\) precedes \(B\) in the fixed tie-breaking order.

Let
\[
  E(B,C)=\1_{\{|B|=|C|\}}
\]
and
\[
  T(A,B,C)=P(A,B,C)E(B,C)
\]
where the $3$-tensor $P$ is the same tensor from the previous section. We have: 
\begin{proposition}\label{prop:triangularity}
Let \(\F\subseteq2^{[n]}\) be sunflower-free.  Then \(T\colon\F^3\longrightarrow \mathbb{Q}\) is $1$-triangular with non-zero diagonal entries with respect to the order \(\preceq\).
\end{proposition}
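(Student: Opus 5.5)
We verify the two conditions in Definition~\ref{i-Triangular} with $i=1$ directly from the two properties of $P$ recorded in Section~\ref{Naslund-Sawin}.

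\emph{Diagonal entries.} For $X\in\F$ we have $E(X,X)=1$. For each coordinate, $2-3x_i\in\{2,-1\}$, so
\[
T(X,X,X)=P(X,X,X)=\prod_{i=1}^n(2-3x_i)\neq 0 .
\]

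\emph{Triangularity.} Let $(A,B,C)\in\F^3$ be off-diagonal with $A\preceq B$ and $A\preceq C$, and suppose $T(A,B,C)\ne0$; we derive a contradiction. Nonvanishing of $E$ gives $|B|=|C|=:s$. Since $A\preceq B$, we have $|A|\le s$. Nonvanishing of $P$ forces, for every coordinate, $x_i+y_i+z_i\ne 2$, where $x,y,z$ are the characteristic vectors of $A,B,C$. By the first property of $P$, the sets $A,B,C$ are not pairwise distinct, so since the triple is off-diagonal, exactly two of them coincide. There are three cases.

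\textbf{Case $B=C\ne A$.} No coordinate may have $y_i=z_i=1$ together with $x_i=0$. Hence $B\subseteq A$, so $|B|\le|A|\le|B|$ and $|A|=|B|$. Together with $B\subseteq A$ this gives $A=B$, a contradiction.

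\textbf{Case $A=B\ne C$.} The condition $x_i+y_i+z_i\ne 2$ forbids $x_i=y_i=1,z_i=0$, so $A\subseteq C$. But $|A|=|B|=|C|$, so $A=C$, a contradiction.

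\textbf{Case $A=C\ne B$.} This is symmetric to the previous case: $A\subseteq B$ with $|A|=|C|=|B|$, so $A=B$, a contradiction.

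Thus $T$ is $1$-triangular. The only subtle step is the case $B=C$, where the constraint $|A|\le|B|$ coming from $A\preceq B$ is essential. Note that only the size component of $\preceq$ is used; the tie-breaking order plays no role.
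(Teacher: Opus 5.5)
Your proof is correct and follows the paper's argument: the same diagonal computation, the same case split on which two of $A,B,C$ coincide, and the same key step that $B=C$ forces $B\subseteq A$, which together with $|A|\le|B|$ (from $A\preceq B$) gives a contradiction. The only differences are cosmetic: you read $|B|=|C|$ directly off $E\neq 0$ instead of splitting on $|C|>|A|$, and you note that the tie-breaking order is never used.
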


\begin{proof}
First, we notice that 
\[
  T(A,A,A)=P(A,A,A)E(A,A)=\prod_{i=1}^n(2-3a_i).
\]
where each factor is either \(2\) or \(-1\) implying that the diagonal entries are non-zero.

Now, suppose \((A,B,C)\) is not diagonal and \(A\preceq B\), \(A\preceq C\). If \(A,B,C\) are pairwise distinct, then, as in previous section \(P(A,B,C)=0\) and hence \(T(A,B,C)=0\). It remains to prove the claim for the cases where two of $A,B$ and $C$ are the same. Let \(A=B\) and \(C\neq A\), and hence
\[
  T(A,A,C)=P(A,A,C)E(A,C).
\]
If \(|C|>|A|\), then \(E(A,C)=0\).  Otherwise \(|C|=|A|\) and  \(T(A,A,C)\neq0\) would imply that there is no coordinate $i$ for which \((a_i,a_i,c_i)=(1,1,0)\) which in turn implies that \(A\subseteq C\). But \(|A|=|C|\) implying that \(A=C\) which is a contradiction.  Hence \(T(A,A,C)=0\).  The case \(A=C\) and \(A\neq B\) can be dealt with similarly.

Finally, suppose \(A\neq B=C\).  If \(P(A,B,B)\neq0\) and hence \(T(A,B,B)\neq 0\), then there is no coordinate for which \((a_i,b_i,b_i)=(0,1,1)\), and hence \(B\subseteq A\).  But this is in a contradiction with the fact that \(A\preceq B\). 
\end{proof}

\begin{corollary}\label{cor:triangular-lower}
For every sunflower-free family \(\F\subseteq2^{[n]}\),
\[
  |\F|\le \operatorname{sr}(T).
\]
\end{corollary}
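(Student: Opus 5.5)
This corollary follows by combining Proposition~\ref{prop:triangularity} with Lemma~\ref{TriangularTensor}. The only preliminary step is to place ourselves in the setting of the lemma. We take the totally ordered finite set to be $(\F,\preceq)$, where $\preceq$ is the restriction to $\F$ of the size-then-tie-break order fixed at the start of this section. The restriction of a total order to a subset is still a total order, so $(\F,\preceq)$ is a legitimate choice of $(A,\le)$ in Definition~\ref{i-Triangular} and Lemma~\ref{TriangularTensor}. We take $k=3$, $i=1$, and the field $\mathbb{F}=\QQ$.

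By Proposition~\ref{prop:triangularity}, the $3$-tensor $T\colon\F^3\to\QQ$ is $1$-triangular with respect to $\preceq$, and its diagonal entries $T(A,A,A)=\prod_{j}(2-3a_j)$ are all non-zero. These are exactly the hypotheses of Lemma~\ref{TriangularTensor}, which then gives
\[
\operatorname{sr}(T)=|\F|,
\]
and in particular $|\F|\le\operatorname{sr}(T)$.

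The argument has no real obstacle. The one point to check is that the $1$-triangularity in Proposition~\ref{prop:triangularity} was verified for every off-diagonal triple $(A,B,C)\in\F^3$ with $A\preceq B$ and $A\preceq C$, and that this matches Definition~\ref{i-Triangular} with $i=1$. It does: the proof of the proposition treats the pairwise-distinct case and each case in which exactly two of $A,B,C$ coincide, and these together cover all off-diagonal triples. Note also that the corollary uses only the lower bound from Lemma~\ref{TriangularTensor}. The substantive work in the rest of the paper is therefore to bound $\operatorname{sr}(T)$ from above, exploiting the factor $E(B,C)$.
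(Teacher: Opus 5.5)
Your proposal is correct and is exactly the argument the paper intends, which it leaves implicit. The corollary is obtained by applying Lemma~\ref{TriangularTensor} with $k=3$, $i=1$ and $(A,\le)=(\F,\preceq)$ to the tensor of Proposition~\ref{prop:triangularity}; this gives $\operatorname{sr}(T)=|\F|$, and only the lower bound is needed.
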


\section{Estimating the slice-rank}\label{SliceRankEstimate}

Let $\mathbb{F}$ be a field, and $\mathbb{F}[x_1,\ldots,x_n]$ denote the polynomial ring over $\mathbb{F}$ in the variables $x_1,\ldots,x_n$. For every set $S\subseteq [n]$, let $x_S=\prod_{i\in S}x_i$ be the square-free monomial corresponding to $S$. For $r\le n$, $N(r)=\sum_{i=0}^r\binom ni$ is the number of square-free monomials of degree at most $r$. If we identify each set \(A\subseteq[n]\) with its characteristic vector
\[
a=(a_1,\dots,a_n)\in\{0,1\}^n,
\]
in which $a_i=1$ if and only if $i\in A$, then we may consider $x_S$ as a function from the subsets of $[n]$ to $\{0,1\}$ if we let $x_S(A)=\prod_{i\in S}a_i$, i.e., \(x_S(A)=\1_{\{S\subseteq A\}}\) for $A\subseteq [n]$. 

In order to give an upper-bound on the slice-rank of the 1-triangular tensor of the previous section,  we expand $P(A,B,C)$ in $T(A,B,C)$ and notice that every monomial in the expansion is of the form
\[
  a_I b_J c_K
  =\prod_{i\in I}a_i\prod_{j\in J}b_j\prod_{k\in K}c_k,
\]
for some pairwise disjoint \(I,J,K\subseteq[n]\) where
\[
  |I|+|J|+|K|\le n.
\]
Now, as
\[
  E(B,C)=\sum_{\ell=0}^n\1_{\{|B|=\ell\}}\1_{\{|C|=\ell\}},
\] with the notations as before we may consider the functions $x_{I_q}(A)=\prod_{i\in I_q}a_i$, $x_{J_q}(B)=\prod_{j\in J_q}b_j$, $x_{K_q}(C)=\prod_{k\in K_q}c_k$, \(1_{\{|B|=\ell\}}\), and \(\1_{\{|C|=\ell\}}\) from the subset of \([n]\) to \(\{0,1\}\), expand \(T(A,B,C)\) 
and get the following slice decomposition of $T(A,B,C)$:
\begin{align}
 T(A,B,C)
={}&
 \sum_{q=0}^{r_1-1}
 \lambda_qx_{I_q}(A)\cdot \bigl(x_{J_q}(B)x_{K_q}(C)E(B,C)\bigr)
 \notag\\
&+
 \sum_{\ell=0}^{n}
 \sum_{q=r_1}^{r_1+r_2-1}\gamma_{l,q}
 \Bigl(\1_{\{|B|=\ell\}}x_{J_q}(B)\Bigr)
 G_{\ell,q}(A,C)
 \notag\\
&+
 \sum_{\ell=0}^{n}
  \sum_{q=r_1+r_2}^{r_1+r_2+r_3-1}\theta_{l,q}
 \Bigl(\1_{\{|C|=\ell\}}x_{K_q}(C)\Bigr)
 H_{\ell,q}(A,B)
 \label{eq:three-slice}
\end{align}
for some constants \(\lambda_q,\gamma_{l,q} \) and \(\theta_{l,q}\), and non-negative integers $r_1,r_2$ and $r_3$. 
As the dimension of the vector spaces generated by the functions \(x_{I_q}(A), \1_{\{|B|=\ell\}}x_{J_q}(B)\) and \(\1_{\{|C|=\ell\}}x_{K_q}(C)\) gives an upper bound for the slice-rank of \(T(A,B,C)\), we are going to determine them in the sequel. 

\subsection{Dimension counts}

For $S\subseteq [n]$, as before let $x_S$ denote the function that maps a subset $A$ of $[n]$ to $x_S(A)$, and, furthermore, for $0\le \ell\le n$, let $g_{l,S}$ denote the function which maps a subset $A$ of $[n]$ to $\1_{\{|A|=\ell\}}x_S(A)$. The following lemmas give the dimension of the certain subspaces of the space of functions generated by the functions $x_S$ and $g_{l,S}$. The proof of the first lemma is trivial and hence we omit it here. 

\begin{lemma}\label{Monomial-Count}
  Let \(\mathbb{F}\) be a field of characteristic zero, and
\[
U_r=\operatorname{span}\left\{ x_S: |S|\le r\right\}.
\]  
Then \[\dim U_r=N(r)=\sum_{i=0}^r\binom ni.\]
\end{lemma}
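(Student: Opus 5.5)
The natural approach is to show that the functions $x_S$ with $|S|\le r$, regarded as functions on $2^{[n]}$ (equivalently on $\{0,1\}^n$), are linearly independent. Since there are exactly $N(r)=\sum_{i=0}^r\binom ni$ of them, this gives $\dim U_r=N(r)$ at once. In fact I would prove the stronger statement that the whole family $\{x_S : S\subseteq[n]\}$ is linearly independent over $\mathbb{F}$, and then restrict to the subfamily with $|S|\le r$.

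The key observation is that $x_S(A)=\1_{\{S\subseteq A\}}$. So if we order the subsets of $[n]$ by any linear extension of inclusion, for instance by increasing size as in Section~\ref{Triangular}, the $2^n\times 2^n$ evaluation matrix $M=(x_S(A))_{S,A}$ is triangular. Indeed, $M_{S,A}=1$ forces $S\subseteq A$, so the only nonzero entries have $S$ preceding or equal to $A$, and every diagonal entry equals $1$. Hence $M$ is invertible over any field, and the rows $x_S$ are linearly independent.

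For a more hands-on version of the same argument, suppose $\sum_{|S|\le r}c_Sx_S=0$ with not all $c_S$ zero. Choose $S_0$ of minimal size with $c_{S_0}\neq0$ and evaluate the relation at $A=S_0$. The only terms surviving are those with $S\subseteq S_0$. By minimality, among these only $S=S_0$ can have a nonzero coefficient, which yields $c_{S_0}=0$, a contradiction. Nothing here uses characteristic zero, so the hypothesis is harmless.

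I do not expect any real obstacle. The only point requiring care is to make clear that $U_r$ consists of functions on subsets of $[n]$ rather than formal polynomials, which is why evaluation at the sets $S_0$ is the right tool.
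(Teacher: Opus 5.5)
Your proof is correct. The paper omits it as trivial, and your observation that the evaluation matrix $\bigl(\1_{\{S\subseteq A\}}\bigr)_{S,A}$ is unitriangular under a size-compatible order (or equivalently your minimal-$|S_0|$ evaluation argument) is the standard verification intended; it is the same identity-submatrix reasoning the paper uses for the case $\ell<r$ in Lemma~\ref{Layer-FunctionSpaces}. You are also right that characteristic zero plays no role here; it is only needed for the inclusion-matrix step in the next lemma.
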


\begin{lemma}\label{Layer-FunctionSpaces}
Let \(\mathbb{F}\) be a field of characteristic zero, and
\[
V_r=\operatorname{span}\left\{ g_{l,S}:0\le \ell\le n ; |S|\le r
\right\}
\]
 for
\(0\le r\le n/2\).
Then
\[
\dim V_r
=
2\sum_{i=0}^{r-1}\binom ni
+
(n-2r+1)\binom nr.
\]
\end{lemma}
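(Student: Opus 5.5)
The plan is to decompose $V_r$ along the layers of the Boolean cube and compute the dimension layer by layer. For fixed $\ell$, let $W_\ell$ denote the space of functions supported on the layer $L_\ell=\{A\subseteq[n]:|A|=\ell\}$. The functions $g_{\ell,S}$ are supported on $L_\ell$, and functions supported on disjoint layers are linearly independent. Hence
\[
V_r=\bigoplus_{\ell=0}^n V_{r,\ell},\qquad V_{r,\ell}=\operatorname{span}\{g_{\ell,S}:|S|\le r\}.
\]
So $\dim V_r=\sum_{\ell}\dim V_{r,\ell}$. Here $V_{r,\ell}$ is exactly the space of restrictions to $L_\ell$ of polynomials of degree at most $r$, that is, the span of the functions $A\mapsto \1_{\{S\subseteq A\}}$ on $L_\ell$ with $|S|\le r$.

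The key step is the classical computation of $\dim V_{r,\ell}$. I claim that
\[
\dim V_{r,\ell}=\binom{n}{\min(r,\ell,n-\ell)}.
\]
Consider the inclusion matrix $W_{s,\ell}$ between $s$-subsets and $\ell$-subsets, with $s\le \min(\ell,n-\ell)$. Gottlieb's theorem (Kantor/Wilson) says it has full row rank $\binom ns$ over a field of characteristic zero. This gives the lower bound $\dim V_{r,\ell}\ge\binom n{s}$ for $s=\min(r,\ell,n-\ell)$.

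For the upper bound, note two facts.
\begin{itemize}
\item On $L_\ell$, every $x_S$ with $|S|<s$ lies in the span of the $x_{S'}$ with $|S'|=s$, $S'\supseteq S$. Indeed, $\sum_{S'\supseteq S,|S'|=s}x_{S'}=\binom{\ell-|S|}{s-|S|}x_S$, and the coefficient is nonzero because $s\le \ell$.
\item If $\ell< r$, the monomials $x_S$ with $|S|>\ell$ vanish on $L_\ell$. Moreover $W_\ell$ itself has dimension $\binom n\ell$.
\end{itemize}
Both facts hold symmetrically under complementation $A\mapsto [n]\setminus A$, which maps polynomials of degree at most $r$ to polynomials of degree at most $r$. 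So when $n-\ell<r$, the dimension is at most $\binom{n}{n-\ell}$. The full-rank statement is the main nontrivial input; I would cite it rather than reprove it. If a self-contained proof is wanted, one can use the standard argument that $W_{s,\ell}W_{s,\ell}^T$ is positive definite when $s\le\min(\ell,n-\ell)$, via its eigenvalue decomposition in the Johnson scheme.

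The last step is summation, using $r\le n/2$:
\begin{itemize}
\item Layers $\ell=0,\dots,r-1$ contribute $\sum_{i=0}^{r-1}\binom ni$.
\item Layers $\ell=n-r+1,\dots,n$ contribute the same amount by symmetry.
\item The middle layers $r\le \ell\le n-r$, of which there are $n-2r+1$, each contribute $\binom nr$.
\end{itemize}
The range condition $r\le n/2$ guarantees that these three ranges are disjoint and cover $\{0,\dots,n\}$. Adding them gives the stated formula $2\sum_{i=0}^{r-1}\binom ni+(n-2r+1)\binom nr$.
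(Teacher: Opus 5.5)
Your proof is correct and follows essentially the same route as the paper. Both arguments use the layer decomposition $V_r=\bigoplus_\ell V_{r,\ell}$, the identity $\sum_{S'\supseteq S,|S'|=s}x_{S'}=\binom{\ell-|S|}{s-|S|}x_S$ on $L_\ell$ to reduce to top-degree rows, Gottlieb's full-rank theorem for inclusion matrices, and the same three-range summation; your complementation step for $n-\ell<r$ is valid but unnecessary, since $\dim V_{r,\ell}\le\binom n\ell=\binom n{n-\ell}$ holds trivially.
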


\begin{proof}
For every $l$, let
\[
V_{r,l}=\operatorname{span}\left\{ g_{l,S}: |S|\le r
\right\}
\]
and
\[
\Gamma_\ell=\{A\subseteq[n]:|A|=\ell\}.
\]
We notice that if $f\in V_{r,m}$ and $A\in \Gamma_l$ for $m\neq l$, then $f(A)=0$, i.e.,  $\Gamma_l$ supports just the functions in $V_{r,l}$. Equivalently, \(V_{r,\ell}\) is the restriction of \(V_r\) to
\(\Gamma_\ell\). Thus
\begin{equation}\label{VectorSpace-Decomp}
V_r=\bigoplus_{\ell=0}^n V_{r,\ell}.
\end{equation}

Now, let \(M_{r,\ell}\) be the matrix whose rows are indexed by
\(S\subseteq[n]\) with \(|S|\le r\), whose columns are indexed by
\(A\in\Gamma_\ell\), and whose entries are
\[
M_{r,\ell}(S,A)=\1_{\{S\subseteq A\}}.
\]
Thus \(\dim V_{r,\ell}=\rank M_{r,\ell}\).

If \(\ell<r\), the rows indexed by the \(\ell\)-subsets form the
identity matrix, and hence \(M_{r,\ell}\) has full column rank which implies that
\[
\dim V_{r,\ell}=|\Gamma_{\ell}|=\binom n\ell.
\]
Suppose \(r\le \ell\). For every \(S\subseteq[n]\) with
\(|S|=s<r\), one has 
\[
\sum_{\substack{R\supseteq S\\ |R|=r}}x_R
=
\binom{\ell-s}{r-s}x_S.
\]
when two sides of the above identity are considered as functions on $\Gamma_{\ell}$.  Since the scalar in the above identity is nonzero in characteristic zero, every row of
degree less than \(r\) lies in the span of the degree-\(r\) rows.
Therefore \(M_{r,\ell}\) has the same row space as the
\(r\)-versus-\(\ell\) inclusion matrix \(I_{r,\ell}\).

By the full-rank theorem for inclusion matrices \cite{Gottlieb},
\[
\rank I_{r,\ell}
=
\min\left\{\binom nr,\binom n\ell\right\}.
\]
Consequently, in all cases,
\[
\dim V_{r,\ell}
=
\binom{n}{\min\{r,\ell,n-\ell\}}.
\]
Thus using \eqref{VectorSpace-Decomp} and \(r\le n/2\), we obtain
\begin{align*}
\dim V_r
&=
\sum_{\ell=0}^{r-1}\binom n\ell
+
\sum_{\ell=r}^{n-r}\binom nr
+
\sum_{\ell=n-r+1}^{n}\binom n{n-\ell}\\
&=
2\sum_{i=0}^{r-1}\binom ni
+
(n-2r+1)\binom nr,
\end{align*}
as claimed.
\end{proof}

\subsection{Parameterized slice-rank}

\begin{theorem}\label{thm:parameterized}
Let \(\F\subseteq2^{[n]}\) be sunflower-free.  Let \(r_A,r_B,r_C\) be integers satisfying
\[
  0\le r_A,r_B,r_C\le \frac n2
\]
and
\[
  r_A+r_B+r_C\ge n.
\]
Then
\[
  |\F|\le N(r_A)+D(r_B)+D(r_C),
\]
where
\[
  N(r)=\sum_{i=0}^r\binom ni
\]
and, for \(r\le n/2\),
\[
  D(r)=2\sum_{i=0}^{r-1}\binom ni+(n-2r+1)\binom nr.
\]
\end{theorem}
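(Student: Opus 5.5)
The plan is to combine Corollary~\ref{cor:triangular-lower} with an explicit slice decomposition of the form \eqref{eq:three-slice}, adapted to the thresholds \(r_A,r_B,r_C\). By Corollary~\ref{cor:triangular-lower}, \(|\F|\le\operatorname{sr}(T)\). So it suffices to exhibit a decomposition of \(T\) into slices whose number is at most \(N(r_A)+D(r_B)+D(r_C)\). We work over \(\QQ\), so the characteristic-zero hypotheses of Lemma~\ref{Monomial-Count} and Lemma~\ref{Layer-FunctionSpaces} apply.

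First, expand \(P(A,B,C)=\prod_i(2-a_i-b_i-c_i)\) into monomials \(a_Ib_Jc_K\) with pairwise disjoint \(I,J,K\) and \(|I|+|J|+|K|\le n\). Since \(r_A+r_B+r_C\ge n\), no monomial can have \(|I|>r_A\), \(|J|>r_B\) and \(|K|>r_C\) simultaneously; otherwise the degree would be at least \(r_A+r_B+r_C+3>n\). Assign each monomial to a class as follows:
\begin{itemize}[label={}]
\item Class 1 if \(|I|\le r_A\).
\item Class 2 if \(|I|>r_A\) and \(|J|\le r_B\).
\item Class 3 otherwise, in which case \(|K|\le r_C\).
\end{itemize}

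Next, multiply by \(E(B,C)=\sum_\ell \1_{\{|B|=\ell\}}\1_{\{|C|=\ell\}}\) and treat each class separately.

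For class 1, group the terms by the function \(x_I(A)\). The total is
\[
\sum_{|I|\le r_A} x_I(A)\,G_I(B,C).
\]
Choosing a basis of \(U_{r_A}\) and rewriting, this costs at most \(\dim U_{r_A}=N(r_A)\) slices in the \(A\)-coordinate, by Lemma~\ref{Monomial-Count}.

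For class 2, write each term as
\[
\sum_\ell \bigl(\1_{\{|B|=\ell\}}x_J(B)\bigr)\bigl(x_I(A)x_K(C)\1_{\{|C|=\ell\}}\bigr).
\]
The \(B\)-factors are the functions \(g_{\ell,J}\) with \(|J|\le r_B\), so they lie in \(V_{r_B}\). Expressing the whole class-2 sum as \(\sum_h h(B)\,H_h(A,C)\) with \(h\) ranging over a basis of \(V_{r_B}\) costs \(\dim V_{r_B}=D(r_B)\) slices, by Lemma~\ref{Layer-FunctionSpaces}; this needs \(r_B\le n/2\), which is assumed.

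Class 3 is handled symmetrically in the \(C\)-coordinate and costs \(D(r_C)\) slices.

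The general principle behind each step is the following: if a tensor equals \(\sum_t f_t(X)\,g_t(\text{rest})\) with all \(f_t\) in a space \(W\), then rewriting each \(f_t\) in a basis of \(W\) and collecting coefficients gives at most \(\dim W\) slices.

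The step needing the most care is the class-2 and class-3 bookkeeping. One must check that attaching the layer indicator to the \(B\)-factor keeps it inside \(V_{r_B}\), while the remaining indicator \(\1_{\{|C|=\ell\}}\) is absorbed harmlessly into the \((A,C)\)-factor. One must also check that the dimension count of Lemma~\ref{Layer-FunctionSpaces} applies on all of \(2^{[n]}\), not merely on \(\F\). Restricting to \(\F\) only decreases dimensions, so the bound persists.

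Summing the three contributions gives \(\operatorname{sr}(T)\le N(r_A)+D(r_B)+D(r_C)\), and hence the theorem.
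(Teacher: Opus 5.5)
Your proposal is correct and follows the paper's proof essentially verbatim. You combine Corollary~\ref{cor:triangular-lower} with the same three-way partition of the monomials $a_Ib_Jc_K$ of $P$, attach the layer indicator $\1_{\{|B|=\ell\}}$ (resp.\ $\1_{\{|C|=\ell\}}$) to the slicing variable, and bound the three costs by Lemmas~\ref{Monomial-Count} and~\ref{Layer-FunctionSpaces}; your strict inequality $|I|>r_A$ for class~2 and your remark that restricting these function spaces to $\F$ can only lower their dimensions are small points the paper's write-up glosses over.
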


\begin{proof}
By Corollary~\ref{cor:triangular-lower}, it suffices to prove
\[
  \operatorname{sr}(T)\le N(r_A)+D(r_B)+D(r_C).
\]
Expanding
\[
  P(A,B,C)=\prod_{i=1}^n(2-a_i-b_i-c_i),
\]
every monomial in this expansion is of the form
\[
  a_I b_J c_K
  =\prod_{i\in I}a_i\prod_{j\in J}b_j\prod_{k\in K}c_k,
\]
where \(I,J,K\subseteq[n]\) are pairwise disjoint, and 
\[
  |I|+|J|+|K|\le n.
\]
Since \(r_A+r_B+r_C\ge n\), at least one of the following three inequalities must hold for every monomial 
\[
  |I|\le r_A,
  \qquad
  |J|\le r_B,
  \qquad
  |K|\le r_C.
\]

Suppose \(|I|\le r_A\) in the monomial  \(a_I b_J c_K\). Then the corresponding term of
\[
  T(A,B,C)=P(A,B,C)E(B,C)
\]
can be written as the first-coordinate slice
\[
  a_I(A)\cdot \bigl(b_J(B)c_K(C)E(B,C)\bigr)
\]
where as in the previous section 
\[
a_I(A)=\prod_{i\in I}a_i.
\]
As noted in the previous section, the functions \(a_I\) span a space of dimension at most \(N(r_A)\).

If \(|I|\ge r_A\) and  \(|J|\le r_B\), then using the fact that 
\[
  E(B,C)=\sum_{\ell=0}^n\1_{\{|B|=\ell\}}\1_{\{|C|=\ell\}}.
\]
we can slice the corresponding monomial through the \(B\)-variable using functions in
\[
  \Span\left\{
       \1_{\{|B|=\ell\}}b_J(B):0\le\ell\le n,
       \ |J|\le r_B
     \right\},
\]
whose dimension is \(D(r_B)\) by Lemma \ref{Layer-FunctionSpaces}.

Now, if \(|I|\ge r_A, |J|\ge r_B\) and \(|K| \le r_C\), similar to the above case the corresponding monomials can be sliced through the \(C\)-variable at cost at most \(D(r_C)\) slices. Therefore
\[
  \operatorname{sr}(T)\le N(r_A)+D(r_B)+D(r_C).
\]
\end{proof}

\subsection{Optimization of the slice-rank bound}
In this section, we find the minimum of the parametrized slice-rank computed in Theorem~\ref{thm:parameterized} in order to give an upper-bound for the size of the sunflower free set $\mathcal{F}$. 

Let
\[
 m:=\left\lfloor\frac n3\right\rfloor,
 \qquad
 B_n:=\binom{n}{m},
 \qquad
 t:=\left\lfloor\frac13\log_2 n\right\rfloor,
\]
and 
\[
 r_B=r_C=m-t,
 \qquad
 r_A=n-2(m-t).
\]
Then $r_A+r_B+r_C=n$, and, for all sufficiently large $n$, these integers lie in
$[0,n/2]$ satisfying the conditions of Theorem~\ref{thm:parameterized}. Hence
\[
 |\mathcal{F}|\le N(n-2(m-t))+2D(m-t).
\]
We estimate the two terms separately. Let $n=3m+\rho$ where $\rho\in\{0,1,2\}$.  For $1\le k\le m$,
\[
 \frac{\binom{n}{k-1}}{\binom{n}{k}}
 =\frac{k}{n-k+1}
 \le \frac{m}{n-m+1}<\frac12.
\]
Thus,
\[
 \binom{n}{m-t}\le 2^{-t}B_n,
 \qquad
 \sum_{i=0}^{m-t-1}\binom ni\le \binom{n}{m-t},
\]
and therefore
\[
 D(m-t)=2\sum_{i=0}^{m-t-1}\binom ni+(n-2(m-t)+1)\binom n{m-t}
 \le (n-2(m-t)+3)\binom{n}{m-t}
 \le (n+3)2^{-t}B_n.
\]
On the other hand, $r_A=n-2(m-t)=m+\rho+2t$.  Since $t=O(\log n)$, we have
$r_A\le 2n/5$ for all sufficiently large $n$.  Thus, for $1\le k\le r_A$,
\[
 \frac{\binom{n}{k-1}}{\binom{n}{k}}
 \le \frac{r_A}{n-r_A+1}\le \frac23,
\]
which implies that 
\[
 N(r_A)=\sum_{i=0}^{r_A}\binom ni\le 3\binom{n}{r_A}.
\]
Moreover, the consecutive ratios $\binom{n}{m+i}/\binom{n}{m+i-1}$ for $i\ge 1$ are at most $2$, so
\[
 \binom{n}{r_A}
 =\binom{n}{m+\rho+2t}
 \le 2^{\rho+2t}B_n
 \le 4\cdot 4^t B_n.
\]
Combining the preceding bounds, we obtain
\[
 |\mathcal{F}|
 \le \bigl(12\cdot4^t+2(n+3)2^{-t}\bigr)B_n
 =O\!\left((4^t+n2^{-t})B_n\right).
\]
Since $t=\lfloor \frac13\log_2 n\rfloor$, we have
$2^t=\Theta(n^{1/3})$, and hence
\[
 |\mathcal{F}|
 =O\!\left(n^{2/3}\binom{n}{\lfloor n/3\rfloor}\right).
\]
Finally, from Stirling's formula
\[
 \binom{n}{\lfloor n/3\rfloor}
 =\Theta\!\left(
 n^{-1/2}\left(\frac{3}{2^{2/3}}\right)^n
 \right),
\]
implying
\[
 |\mathcal{F}|
 =O\!\left(
 n^{1/6}\left(\frac{3}{2^{2/3}}\right)^n
 \right)
\]
as claimed. 
\section{Acknowledgments}
As it was mentioned in the previous sections that in~\cite{AhmadiNorouzi} we introduced the notion of higher-order $i$-triangular tensors and proved an analogue of Tao's lemma for triangular tensors of even order, and wondered whether the same result holds for triangular tensors of odd order. Asking ChatGPT for a proof of Lemma~\ref{TriangularTensor}, it came up with a proof. The proof of ChatGPT assured us of the validity of the lemma and was the starting point of this project. Our proof of Lemma~\ref{TriangularTensor} in this paper is shorter and different from that of ChatGPT.


\begin{thebibliography}{99}

\bibitem{ErdosRado}
P. Erd\H{o}s and R. Rado,
\emph{Intersection theorems for systems of sets},
J. London Math. Soc. \textbf{35} (1960), 85--90.
\url{https://doi.org/10.1112/jlms/s1-35.1.85}

\bibitem{ErdosSzemeredi1978}
P. Erd\H{o}s and E. Szemer'{e}di,
\emph{Combinatorial properties of systems of sets},
J. Combin. Theory Ser. A \textbf{24} (1978), no.~3, 308--313.
\url{https://doi.org/10.1016/0097-3165(78)90060-2}

\bibitem{NaslundSawin}
E. Naslund and W. Sawin,
\emph{Upper bounds for sunflower-free sets},
Forum Math. Sigma \textbf{5} (2017), e15, 10 pp.
\url{https://doi.org/10.1017/fms.2017.12}

\bibitem{TaoWeblog}
T. Tao,
\emph{A symmetric formulation of the
Croot--Lev--Pach--Ellenberg--Gijswijt capset bound},
What's new, 18 May 2016 (accessed 26 June 2026).
\url{https://terrytao.wordpress.com/2016/05/18/a-symmetric-formulation-of-the-croot-lev-pach-ellenberg-gijswijt-capset-bound/}

\bibitem{croot_progression-free_2016}
E. Croot, V. F. Lev, and P. P. Pach,
\emph{Progression-free sets in $\mathbb Z_4^n$ are exponentially small},
Ann. of Math. (2) \textbf{185} (2017), no.~1, 331--337.
\url{https://doi.org/10.4007/annals.2017.185.1.7}

\bibitem{ellenberg2016large}
J. S. Ellenberg and D. Gijswijt,
\emph{On large subsets of $\mathbb F_q^n$ with no three-term
arithmetic progression},
Ann. of Math. (2) \textbf{185} (2017), no.~1, 339--343.
\url{https://doi.org/10.4007/annals.2017.185.1.8}

\bibitem{Amanov}
A. Amanov and D. Yeliussizov,
\emph{Tensor slice rank and Cayley's first hyperdeterminant},
Linear Algebra Appl. \textbf{656} (2023), 224--246.
\url{https://doi.org/10.1016/j.laa.2022.09.027}

\bibitem{AhmadiNorouzi}
O. Ahmadi and H. Norouzi,
\emph{Triangular tensors and set-intersection problems},
arXiv:2508.13809v2 [math.CO], 2025.
\url{https://doi.org/10.48550/arXiv.2508.13809}

\bibitem{SawinTaoNotes}
T. Tao and W. Sawin,
\emph{Notes on the ``slice rank'' of tensors},
What's new, 24 August 2016 (accessed 26 June 2026).
\url{https://terrytao.wordpress.com/2016/08/24/notes-on-the-slice-rank-of-tensors/}

\bibitem{Gottlieb}
D. H. Gottlieb,
\emph{A certain class of incidence matrices},
Proc. Amer. Math. Soc. \textbf{17} (1966), no.~6, 1233--1237.
\url{https://doi.org/10.1090/S0002-9939-1966-0204305-9}

\end{thebibliography}
\end{document}